\documentclass[11pt]{article}
\usepackage[margin=1in]{geometry}
\usepackage{amsmath,amssymb,amsfonts,amsthm,bbm}
\usepackage{mathtools}
\usepackage{graphicx}
\graphicspath{{./},{./figures},{./figure_p2},{./figure_p5}}
\usepackage{tikz}
\usepackage{xcolor}
\usepackage{algorithm}
\usepackage{algpseudocode}
\usepackage{hyperref}
\hypersetup{colorlinks=true, linkcolor=blue, citecolor=blue, urlcolor=blue}
\usepackage{authblk}
\usepackage{microtype}

\newcommand{\calG}{{\mathcal G}}
\newcommand{\calV}{{\mathcal V}}
\newcommand{\calE}{{\mathcal E}}

\newcommand{\R}{\mathbb{R}}
\newcommand{\Prob}{\operatorname{Prob}}

\newcommand{\dd}{\mathrm{d}}

\newcommand{\argmin}{\mathop{\rm argmin}}

\newcommand{\blue}{\color{blue}}

\newtheorem{theorem}{Theorem}

\newtheorem{proposition}[theorem]{Proposition}
\newtheorem{problem}{Problem}
\newtheorem{definition}{Definition}

\newtheorem{remark}{Remark}

\newtheorem{assumption}{Assumption}


\title{Optimal transport control of network flow under Fundamental Diagram constraints}

\author[1,2]{Anqi Dong\thanks{\texttt{anqid@kth.se}}}
\author[1,3]{Karl Henrik Johansson\thanks{\texttt{kallej@kth.se}}}
\author[2,3]{Johan Karlsson\thanks{\texttt{johan.karlsson@math.kth.se}}}

\affil[1]{Division of Decision and Control Systems, KTH Royal Institute of Technology, SE-100 44 Stockholm, Sweden}
\affil[2]{Department of Mathematics, KTH Royal Institute of Technology, SE-100 44 Stockholm, Sweden}
\affil[3]{Digital Futures, KTH Royal Institute of Technology, SE-100 44 Stockholm, Sweden}

\date{} 

\begin{document}
\maketitle

\begingroup\renewcommand\thefootnote{}\footnote{This research is supported by Swedish Research Council Distinguished Professor Grant 2017-01078, Knut and Alice Wallenberg Foundation Wallenberg Scholar Grant, Swedish Research Council (VR) under grant 2020-03454, and KTH Digital Futures.}\addtocounter{footnote}{-1}\endgroup

\begin{abstract}
Optimal transport (OT) theory provides a principled framework for modeling mass movement in applications such as mobility, logistics, and economics. Classical formulations, however, generally ignore capacity limits that are intrinsic in applications, in particular in traffic flow problems. We address this limitation by incorporating fundamental diagrams into a dynamic continuous-flow OT model on graphs, thereby including empirical relations between local density and maximal flux.  We adopt an Eulerian kinetic action on graphs that preserves displacement interpolation in direct analogy with the continuous theory. Momentum lives on edges and density on nodes, mirroring road-network semantics in which segments carry speed and intersections store mass. The resulting fundamental-diagram-constrained OT problem preserves mass conservation and admits a convex variational discretization, yielding optimal congestion-aware traffic flow over road networks. We establish existence and uniqueness of the optimal flow with sources and sinks, and develop an efficient convex optimization method. Numerical studies begin with a single-lane line network and scale to a city-level road network.%
\footnote{The code to conduct all experiments can be found at {\blue \url{https://github.com/dytroshut/continuous_net_flow}}}

\end{abstract}

\begin{keywords}
Optimal transport, fundamental diagram, traffic flow control, flow constraints.
\end{keywords}

\section{Introduction}\label{sec:intro}

The fundamental diagram \cite{greenshields1935study,siebel2006fundamental} is an empirical relation between density and admissible flow that captures the capacity limits of macroscopic motion on road networks. Since Greenshields’ measurements \cite{greenshields1935study}, it has served as a foundation for conservation law models of vehicular traffic and pedestrian crowds. The Lighthill–Whitham–Richards (LWR) model \cite{lighthill1955kinematic1,richards1956shock} embeds this relation in a first-order conservation law and explains shock formation, queue spillback, and capacity drop, with discrete solvers such as the cell transmission model (CTM) \cite{daganzo1994cell}. Extensions to two dimensions cover lane-free motion and open domains. However, these PDE based descriptions are largely descriptive and are typically used in open loop. Direct use for large scale planning and control is difficult because of nonlinearities, boundary and interface conditions, and calibration across heterogeneous regions. This motivates a formulation that retains conservation and fundamental diagram constraints while enabling tractable optimization and feedback design.

Optimal transport \cite{villani2021topics,rachev1998mass} offers a variational viewpoint. In the formulation of Benamou and Brenier, transport is posed as minimum energy steering of the continuity equation \cite{benamou2000computational}. When mass is supported on a graph, replacing spatial derivatives with the incidence operator yields a continuous flow distance that is computationally scalable and compatible with efficient solvers \cite{solomon2016continuous, treleaven2013explicit,maas2011gradient,chen2016robust}. Classical OT, however, does not account for congestion. Without capacity limits, flows through narrow or saturated links can become unrealistically large under high demand. Recent efforts address this limitation by imposing 
momentum bounds \cite{dong2024monge,stephanovitch2024optimal,haasler2024scalable} or by showing that continuous flow can be constrained to abide by a beta family of fundamental diagrams \cite{dong2025fundamental}. These directions are promising, yet high-dimensional representations and large network instances remain challenging for planning and control at scale. This motivates integrating fundamental diagram constraints directly into a graph-based countinuous-flow dynamic OT model that preserves conservation and admits tractable optimization. 

Motivated by these observations, we propose a salient framework embedding fundamental diagram constraints into a graph-based dynamic OT model. Densities are assigned to vertices and link flows to directed edges, with conservation enforced by a discrete continuity equation. On each edge, admissible flow is bounded by a concave fundamental diagram (e.g., Greenshields) evaluated at a midpoint density. A midpoint time discretization then produces a convex program whose action is strictly convex in the flows, which implies a unique minimizer under standard convex constraints. The resulting trajectories inherit the smooth displacement behavior of OT while respecting capacity, and the formulation exposes linear conservation structure and per-edge convex capacity sets that are amenable to scalable optimization and feedback design. To handle city-scale networks and long horizons, we require a method that exploits sparse incidence structure and per-edge separability, enabling parallel capacity projections and predictable memory. We derive an augmented-Lagrangian splitting method \cite{boyd2011distributed, hestenes1969multiplier, bertsekas2014constrained} in which the flow update is a sparse SPD solve, the capacity step is an elementwise projection onto the fundamental-diagram hypograph, and the density update is a strictly convex problem with block-tridiagonal structure. We demonstrate the approach on a single-lane line network and on a planar subgraph of the Athens road network, where it reproduces congestion patterns and converges to CVX \cite{grant2008cvx} baselines.

The rest of the paper is organized as follows. Section~\ref{sec:prelim} reviews the dynamic OT formulation and the fundamental diagram/LWR model as motivation. Section~\ref{sec:graph-trans} formulates FD-OT on graphs and establishes basic well-posedness. Section~\ref{sec:optimization} presents an ADMM-based solver with efficient updates. Section~\ref{sec:numerical} reports numerical studies, and Section~\ref{sec:conclusion} concludes with remarks and future directions.

\section{Preliminaries}\label{sec:prelim}

\subsection{Dynamical optimal mass transport}

The Benamou--Brenier formulation of OT \cite{benamou2000computational} describes transportation with respect to the instantaneous kinetic energy
\begin{equation*}
\mathcal{L}(\rho,m) = \int_{\mathbb{R}^n} \frac{\|m(x)\|^2}{2\,\rho(x)} \, dx
\end{equation*}
between two probability densities $\rho_0,\rho_1 \in \mathcal{P}(\mathbb{R}^n)$ over a fixed transportation interval $[0,1]$. The squared 2--Wasserstein distance is then the minimal action
\begin{align*}
\mathcal W_2^2(\rho_0,\rho_1) = \min_{(\rho,m)} \int_0^1 \mathcal{L}\big(\rho(t),m(t)\big) \, dt,
\end{align*}
subject to the continuity equation
\begin{equation}\label{eq:conserve}
\partial_t \rho(t,x) + \nabla_x \cdot m(t,x) = 0,
\end{equation}
with boundary conditions $\rho(0)=\rho_0$ and $\rho(1)=\rho_1$, and momentum field $m:=\rho v$ with $v$ the velocity field.

From a control perspective, this is the problem of steering the continuity equation with minimal energy \cite{chen2016optimal}. The optimal solution defines a geodesic in Wasserstein space, and the corresponding density path is uniquely determined. At the particle level, each trajectory follows constant velocity, and the density evolves through displacement interpolation,
\begin{equation*}
\rho_t = \big((1-t)\mathrm{Id} + tT\big)_{\#}\rho_0, \quad t \in [0,1],
\end{equation*}
where $T$ is the optimal Monge map from $\rho_0$ to $\rho_1$ \cite{gangbo1996geometry,villani2021topics}.

\subsection{Fundamental diagram and LWR model}

The fundamental diagram captures the empirical observation that flow capacity on a link depends on its density. It represents the intrinsic congestion constraint of physical networks and provides the basis for many macroscopic traffic flow models. We first recall the fundamental diagram and then its incorporation into the LWR model before presenting our relaxed optimal transport formulation.

\begin{definition}[\bf Fundamental diagram]
The fundamental diagram specifies the relation between density $\rho$ and admissible momentum $m$. In its classical Greenshields form, the momentum $m(\rho)$ is given by
\begin{equation}\label{eq:fd}
\mathcal{Q}(\rho) = v_0 \, \rho \Bigl(1 - \rho/\hat\rho\Bigr),
\qquad 0 \le \rho \le \hat\rho,
\end{equation}
where $v_0$ is the free-flow speed and $\hat\rho>0$ is the jam density. The function $\mathcal{Q}(\rho)$ is concave, vanishes at $\rho=0$ and $\rho=\hat\rho$. The velocity-density and momentum-density relations are illustrated in Figure~\ref{fig:fundamental-diagram}.
\end{definition}

\begin{figure}[htb!]
    \centering
    \includegraphics[width=0.4\linewidth]{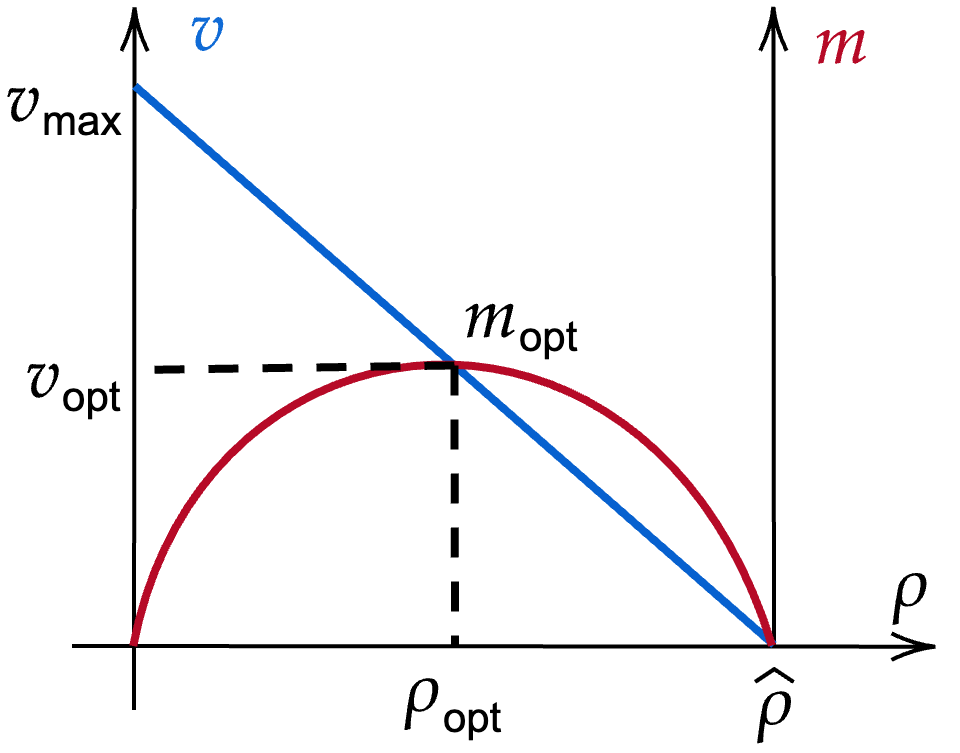}
    \caption{Greenshields fundamental diagram. The blue line shows the velocity-density relation, while the red curve shows the momentum (flux) as a function of density. The optimal operating point $(\rho_{\text{opt}}, m_{\text{opt}})$ corresponds to the maximum flux, attained at half the jam density $\hat\rho/2$ with velocity $v_{\text{opt}}$.}
    \label{fig:fundamental-diagram}
    \vspace{-1em}
\end{figure}

\begin{definition}[\bf LWR model]
The Lighthill–Whitham–Richards (LWR) model incorporates the fundamental diagram \eqref{eq:fd} into the conservation law \eqref{eq:conserve}, i.e., 
\begin{equation*}
\partial_t \rho(t,x) + \partial_x \mathcal{Q}\bigl(\rho(t,x)\bigr) = 0,
\end{equation*}
where $\rho(t,x)$ denotes the density field. This model predicts shock waves, spillback, and capacity drop, and is widely used for macroscopic traffic flow analysis.
\end{definition}

\subsection{Fundamental-diagram-constraint OT}
While the LWR model prescribes flux as a nonlinear function of density, we take a different viewpoint motivated by optimal transport. We keep the continuity equation in its general form $\partial_t \rho(t,x) + \nabla \cdot m(t,x) = 0$, and regard the momentum field $m$ as a decision variable, which can be controlled, e.g., by imposing variable speed limits. Therefore, the fundamental diagram then appears as a convex constraint
\begin{subequations}
\begin{equation*}
0 \le m(t,x) \le \mathcal{Q}\bigl(\rho(t,x)\bigr),
\end{equation*}
and we seek to minimize the kinetic energy action
\begin{equation*}
\int_0^1 \int_{\R^n} \frac{\|m(t,x)\|^2}{2\,\rho(t,x)} \,\dd x \,\dd t,
\end{equation*}    
\end{subequations}
subject to the continuity equation and the capacity constraint. 

This relaxation connects the classical LWR model, which specifies flux as a function of density, with the dynamic viewpoint of optimal transport. LWR was originally one-dimensional, but has been extended to two-dimensional versions and used for lane-free traffic, urban grids, and pedestrian dynamics, capturing anisotropic propagation of congestion waves and complex shock interactions
\cite{mollier2019two,agrawal2023two}. These PDE-based models are difficult to analyze and to simulate at scale, and their nonlinear structure is not directly amenable to optimization and feedback design.

A natural step for control is to discretize space into cells and pose the dynamics on a network: densities live on nodes, flows on edges, and the fundamental diagram bounds admissible transfers on each edge. This yields a relaxed, controllable formulation that preserves conservation and congestion while exposing linear conservation constraints and convex per-edge capacity sets and justifies a network-based formulation of congestion-aware transport that is relevant to large-scale traffic management, evacuation planning in pedestrian systems.

\section{Graph-based optimal transportation}\label{sec:graph-trans}

We follow the continuous flow formulation as in~\cite{solomon2016continuous}. The domain is a finite connected network $\calG=(\calV,\calE)$ with vertex set $\calV=\{v_1,\dots,v_n\}$ and directed edge set $\calE=\{e_1,\dots,e_m\}$ obtained by duplicating each undirected edge with both orientations. For $e_\ell=(v_i\to v_j)\in\calE$, $v_i$ is the tail and $v_j$ is the head. The incidence operator $D\in\R^{m\times n}$ is
\[
D_{\ell k}=\begin{cases}
-1,& v_k=v_i,\\[2pt]
+1,& v_k=v_j,\\[2pt]
0,& \text{otherwise},
\end{cases}
\]
so that $(Dx)_\ell = x(v_j)-x(v_i)$, for all $x\in\R^n$. The vertex divergence of an edge flow $m\in\R^m$ is $D^\top m$, i.e.,
\[
(D^\top m)(v_k)=\!\!\sum_{e_\ell:\,v_k=\text{head}(e_\ell)}\!\! m_\ell\;-\!\!\sum_{e_\ell:\,v_k=\text{tail}(e_\ell)}\!\! m_\ell,
\]
which represents the inflow of flux at $v_k$ -- the first sum collects flow entering $v_k$, the second flow leaving. This is the graph analogue of the spatial divergence $\nabla_x\cdot m$ and yields the discrete continuity law $\dot\rho + D^\top m = 0$. The space of positive probability measures on vertices is
\begin{equation*}
\Prob(\calG)=\Bigl\{\rho\in\R^{n}_{\geq0}\ \Bigm|\ \sum_{v\in\calV}\rho(v)=1\Bigr\}.
\end{equation*}
Following the continuous flow formulation in \cite{solomon2016continuous}, we consider trajectories $\rho(t)\in\Prob(\calG)$ and nonnegative edge momentum $m(t)\in\R_{\ge0}^{m}$ satisfying the discrete continuity equation
\[
\dot\rho(t)=D^\top m(t),\ \rho(0)=\rho_0, \mbox{ and } \rho(1)=\rho_1.
\]
The kinetic energy functional on network is defined as 
\begin{equation*}
\mathbf{L}(\rho,m) \;=\!\!\!\! \sum_{e_\ell=(v_i\to v_j)} \frac{m(t,e_\ell)^2}{2}\!\left(\frac{1}{\rho(t,v_i)}+\frac{1}{\rho(t,v_j)}\right).
\end{equation*}
The squared continuous flow transport distance between $\rho_0$ and $\rho_1$ is the minimal action
\begin{equation*}
\mathcal{W}_2^2(\rho_0,\rho_1)\;=\; \min_{(\rho,m)} \int_0^1 \mathbf{L}(\rho(t),m(t))\,dt,
\end{equation*}
where the optimum is over all admissible $(\rho,m)$ satisfying the continuity equation. This construction yields a metric on $\Prob(\calG)$ and induces a Riemannian structure under which minimizers are constant speed geodesics \cite{solomon2016continuous}.

For numerical purposes, we use a midpoint time discretization. Densities are sampled at $t_i=i/k$ as $\rho_0,\ldots,\rho_k$, and edge momentum at midpoints $\tau_i=(i-\tfrac{1}{2})/k$ as $m_1,\ldots,m_k$. For an oriented edge $e_\ell=(v_a\to v_b)$, define
\begin{equation*}
\bar{\rho}_{e_\ell,i} \;=\; \frac{\rho_{i-1}(v_a) + \rho_i(v_b)}{2}, \quad e_\ell\in\mathcal{E},\ i=1,\ldots,k.
\end{equation*}
The discrete action is
\begin{equation*}
\mathcal{J}(\rho,m) \;=\; \sum_{i=1}^{k} \sum_{e_\ell\in\mathcal{E}} \frac{k\, m_i(e_\ell)^2}{2\,\bar{\rho}_{e_\ell,i}},
\end{equation*}
subject to the discrete continuity relations
\begin{equation*}
\rho_i - \rho_{i-1} \;=\; D^\top m_i, \qquad i=1,\ldots,k,
\end{equation*}
with endpoints $\rho_0$ and $\rho_k$ prescribed. This yields a convex action with linear conservation constraints; algorithmic properties and complexity are discussed later.

To model congestion and flow limitations on networks such as transportation or communication systems, we impose a fundamental diagram that constrains edge momentum as a function of local density. We adopt the classical Greenshields relation.

\begin{definition}[\bf Edge-wise Fundamental Diagram]\label{def:greenshields}
For each directed edge $e_\ell=(v_a\to v_b)\in\mathcal{E}$, the maximum allowable momentum is given by
\begin{equation*}
\mathcal{Q}_{e_\ell}(\rho) \;=\; v_{0}^{e_\ell}\,\rho\!\left(1 - \frac{\rho}{\hat\rho^{e_\ell}}\right),
\qquad 0 \le \rho \le \hat\rho^{e_\ell},
\end{equation*}
where $v_{0}^{e_\ell}>0$ is the free flow speed and $\hat\rho^{e_\ell}>0$ is the jam density. At time $t$, the midpoint density on $e_\ell=(v_a\to v_b)$ is
\begin{equation}\label{eq:mid-point-rho}
\bar\rho_{e_\ell}(t) \;=\; \tfrac{1}{2}\big(\rho(t,v_a)+\rho(t,v_b)\big),
\end{equation}
and the feasible momentum field $m(t)\in\mathbb{R}_{\ge 0}^{m}$ must satisfy the capacity constraint, i.e.,
\begin{equation*}
0 \le m(t,e_\ell) \;\le\; \mathcal{Q}_{e_\ell}\big(\bar\rho_{e_\ell}(t)\big), \;\;\forall\, e_\ell\in\mathcal{E},\forall\, t\in[0,1].
\end{equation*}
\end{definition}
The velocity to density and momentum to density relations are illustrated in Fig.~\ref{fig:fundamental-diagram}. The problem thus read as follows.

\begin{problem}[\bf Continuous-flow over traffic network]\label{prob:1}
Given initial and terminal densities $\rho_0, \rho_1 \in \Prob(\calG)$, find a pair $(\rho(t), m(t))$ that minimizes the action functional
\begin{subequations}
\begin{align}\label{eq:objective}
\int_0^1 \sum_{e_\ell=(v_i\to v_j)} \frac{m(t,e_\ell)^2}{2} \left( \frac{1}{\rho(t,v_i)} + \frac{1}{\rho(t,v_j)} \right)\, dt,
\end{align}    
subject to the continuity constraints
\begin{equation*}
\dot \rho(t) = D^\top m(t)    
\end{equation*}
with the marginal constraint $\rho(0) = \rho_0$, $\rho(1) = \rho_1$ and the fundamental-diagram constraint for congestion, i.e.,
\begin{equation*}
0 \le m(t,e_\ell) \le \mathcal{Q}_{e_\ell} \bigl( \bar\rho_{e_\ell}(t) \bigr),\ \ \forall\, e_\ell \in \calE. 
\end{equation*}
\end{subequations}
\end{problem}

We are now ready to establish convexity and uniqueness, under appropriate existence conditions. Note that the objective \eqref{eq:objective} is convex but not strictly convex. 
\begin{assumption}[\bf Existence]\label{ass:existence} There exists at least one admissible solution $(\rho,m)$ 
with $\rho:[0,1]\to\Prob(\calG)$ and $m\in L^2\big([0,1],\R_{\ge0}^{m}\big)$ satisfying the continuity equation and the boundary conditions, and the fundamental–diagram capacity constraints $0\le m(t,e_\ell)\le \mathcal{Q}_{e_\ell}\!\big(\bar\rho_{e_\ell}(t)\big)$ $\forall t\in[0,1]$ and $e_\ell\in\calE$.
\end{assumption}

\begin{proposition}[\bf Uniqueness]\label{prop:unique}
Under Assumption~\ref{ass:existence}, Problem~\ref{prob:1}
admits a unique minimizer $(\rho^\star,m^\star)$. 
\end{proposition}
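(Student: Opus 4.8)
The plan is to exploit convexity of the problem together with the rigidity of equality in the convexity of the kinetic action: this pins down the density path, and strict convexity in the momentum alone then finishes. Throughout write $\mathcal{A}(\rho,m)=\int_0^1\mathbf{L}(\rho(t),m(t))\,\dd t$ for the action in \eqref{eq:objective}. \emph{Step 1 (the problem is convex).} I would first verify that the admissible set of Problem~\ref{prob:1} is convex: the continuity equation $\dot\rho=D^\top m$ and the endpoint conditions $\rho(0)=\rho_0,\ \rho(1)=\rho_1$ are affine in $(\rho,m)$; the pointwise constraints $m\ge 0$ and $\rho(t)\in\Prob(\calG)$ are convex; and the capacity constraint $m(t,e_\ell)\le\mathcal{Q}_{e_\ell}(\bar\rho_{e_\ell}(t))$ is convex because, by \eqref{eq:mid-point-rho}, $\bar\rho_{e_\ell}$ is linear in $\rho$ and $\mathcal{Q}_{e_\ell}$ is concave, so $\rho\mapsto\mathcal{Q}_{e_\ell}(\bar\rho_{e_\ell})$ is concave and the constraint reads ``linear in $m$ minus concave in $\rho$ $\le 0$''. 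The objective $\mathcal{A}$ is jointly convex since each summand $m(t,e_\ell)^2/(2\rho(t,v))$ is (half of) the perspective of $x\mapsto x^2$, hence jointly convex on $\R\times\R_{\ge0}$, and summation over edges and integration in $t$ preserve convexity. By Assumption~\ref{ass:existence} the minimizer set is therefore nonempty and convex.

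\emph{Step 2 (averaging and rigidity).} Suppose $(\rho^a,m^a)\ne(\rho^b,m^b)$ are both optimal, with optimal value $\mathcal{A}^\star$. The midpoint $(\bar\rho,\bar m)=\tfrac12(\rho^a+\rho^b,\,m^a+m^b)$ is admissible by Step~1, and convexity gives $\mathcal{A}(\bar\rho,\bar m)\le\tfrac12\mathcal{A}(\rho^a,m^a)+\tfrac12\mathcal{A}(\rho^b,m^b)=\mathcal{A}^\star$, so equality must hold. Since the integrand is a sum of nonnegative perspective terms, equality holds termwise for a.e.\ $t$ and every edge $e_\ell=(v_i\to v_j)$. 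Rigidity of the perspective function then forces, for each such term, that either $m^a(t,e_\ell)=m^b(t,e_\ell)=0$ or there is $c_\ell(t)>0$ with
\[
\big(\rho^a(t,v_i),\rho^a(t,v_j),m^a(t,e_\ell)\big)=c_\ell(t)\big(\rho^b(t,v_i),\rho^b(t,v_j),m^b(t,e_\ell)\big),
\]
the remaining degenerate possibility forcing the density to vanish at both $v_i$ and $v_j$.

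\emph{Step 3 (from edgewise proportionality to $\rho^a=\rho^b$, then to $m^a=m^b$).} On the subgraph of edges carrying positive flow at time $t$, two adjacent such edges share a vertex where the ratio of the positive densities determines the scalar, so $c_\ell(t)$ is constant on each connected component. Using connectivity of $\calG$, the normalization $\sum_{v}\rho^a(t,v)=\sum_v\rho^b(t,v)=1$, and the continuity equation to handle vertices and time intervals with no active incident edge — there both densities are locally stationary and can be matched by propagating in time from a boundary instant or from a time at which the vertex is active — I would conclude $\rho^a(t)=\rho^b(t)$ for a.e.\ $t$, hence for all $t$ by continuity of the trajectories. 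This is the graph counterpart of the classical fact that the Benamou--Brenier geodesic has a unique density path. Finally, with the common path $\rho^\star:=\rho^a=\rho^b$ fixed, $m\mapsto\mathcal{A}(\rho^\star,m)$ is strictly convex on the admissible set (the quadratic term is strictly convex in $m(t,e_\ell)$ wherever $\bar\rho^{\star}_{e_\ell}(t)>0$, and finiteness of $\mathcal{A}$ forces $m(t,e_\ell)=0$ where $\bar\rho^{\star}_{e_\ell}(t)=0$) while the admissible momenta form a convex set; hence $m^a=m^b$, contradicting the assumption and proving uniqueness.

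\emph{Main obstacle.} I expect Step~3 to be the delicate part: the rigidity in Step~2 only relates densities across edges that actually carry flow, and an optimal plan may leave parts of the network momentarily inactive, so upgrading the edgewise proportionality to the global identity $\rho^a(t)=\rho^b(t)$ genuinely requires combining connectivity of $\calG$, the probability normalization, and a time-propagation argument through the continuity equation (an alternative would be a strictly-positive-density regularization followed by a limiting argument). Steps~1, 2 and the concluding momentum step are routine.
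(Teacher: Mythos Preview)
Your approach is essentially the paper's: take a convex combination of two putative optima, check feasibility (affine continuity and boundary data; concavity of $\mathcal{Q}_e$ for the capacity constraint), use joint convexity of the perspective terms $m^2/(2\rho)$ to conclude the midpoint is also optimal, and then analyze the equality case. The one difference is your Step~3. The paper dispatches it in a single line, asserting that equality in the perspective inequality forces the edgewise triples to coincide a.e., hence $m^{(1)}=m^{(2)}$ directly, and then recovers $\rho^{(1)}=\rho^{(2)}$ from the ODE $\dot\rho=D^\top m^\star$ with the common initial datum $\rho_0$. Your more cautious route---noting that $m^2/\rho$ is positively $1$-homogeneous, so equality a priori yields only proportionality along rays (or the degenerate $m=0$ case), and then upgrading to equality via connectivity, the probability normalization, and time propagation through the continuity equation---addresses exactly the subtlety you flag as the ``main obstacle''; the paper does not spell this step out.
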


\begin{proof}
Assume there exist two distinct optimal solutions $(\rho^{(1)},m^{(1)})$ and $(\rho^{(2)},m^{(2)})$.
For any $\theta\in(0,1)$ define
\begin{align*}\label{eq:theta-def}
\rho^{\theta}&:=\theta\,\rho^{(1)}+(1-\theta)\,\rho^{(2)},\\
m^{\theta}&:=\theta\,m^{(1)}+(1-\theta)\,m^{(2)}.    
\end{align*}
Feasibility is preserved. The continuity equation and boundary data are affine, so that $\dot\rho^{\theta}=D^\top m^{\theta}$, $\rho^{\theta}(0)=\rho_0$, and $\rho^{\theta}(1)=\rho_1$. The capacity constraint is also satisfied, i.e., for any time $t$ and edge $e=(v_i\to v_j)$, we have 
\begin{align*}
\bar\rho^{\theta}_{e}(t)
= \frac12\big(\rho^{\theta}(t,v_i)+\rho^{\theta}(t,v_j)\big) 
= \theta\,\bar\rho^{(1)}_{e}(t)+(1-\theta)\,\bar\rho^{(2)}_{e}(t),
\end{align*}
and since $m^{(k)}(t,e)\in[0,\mathcal Q_e(\bar\rho^{(k)}_e(t))]$ for $k=1,2$, we have 
\begin{align*}
m^{\theta}(t,e)&=\theta m^{(1)}(t,e)+(1-\theta)m^{(2)}(t,e)\\
&\le \theta\,\mathcal Q_e\!\big(\bar\rho^{(1)}_e(t)\big)
   +(1-\theta)\,\mathcal Q_e\!\big(\bar\rho^{(2)}_e(t)\big)
\le \mathcal Q_e\!\big(\bar\rho^{\theta}_e(t)\big),    
\end{align*}
as function $\mathcal Q_e$ is concave. 

For fixed $t$, the instantaneous energy is in the form
\[
\mathcal{L}(\rho(t),m(t))
=\sum_{e=(v_i\to v_j)} \frac{m(t,e)^2}{2}\Big(\frac{1}{\rho(t,v_i)}+\frac{1}{\rho(t,v_j)}\Big)
\]
is a sum of perspective terms $(m^2)/(2\rho)$, which are jointly convex in $(m,\rho)$ on $\{\rho>0\}$. Hence, for all $t\in[0,1]$, we have 
\begin{equation}\label{eq:conv-ineq}
\mathcal{L}\big(\rho^{\theta},m^{\theta}\big)
\le
\theta\mathcal{L}\big(\rho^{(1)},m^{(1)}\big)
+(1-\theta)\mathcal{L}\big(\rho^{(2)},m^{(2)}\big)
\end{equation}
and integrating over $t\in[0,1]$ yields to 
\begin{align*}
\int_0^1 \mathcal{L}\big(\rho^{\theta},m^{\theta}\big)\,dt
&\le
\theta\!\int_0^1 \mathcal{L}\big(\rho^{(1)},m^{(1)}\big)\,dt+(1-\theta)\!\int_0^1 \mathcal{L}\big(\rho^{(2)},m^{(2)}\big)\,dt.    
\end{align*}

Since equality in \eqref{eq:conv-ineq} holds only when the edgewise triples coincide almost everywhere, we have $m^{(1)}=m^{(2)}$ and $\rho^{(1)}=\rho^{(2)}$. With $m^\star:=m^{(1)}=m^{(2)}$ and $\rho(0)=\rho_0$, the linear relation $\dot\rho=D^\top m^\star$ has a unique solution, hence $\rho^{(1)}=\rho^{(2)}=:\rho^\star$ on $[0,1]$. This contradicts distinctness, and uniqueness follows.
\end{proof}

The single commodity formulation as in Problem~\ref{prob:1} extends directly to multiple commodities on the same network. Let $\mathcal{C}=\{1,\dots,K\}$ index the commodities sharing the same network $\calG=(\calV,\calE)$. For each $c\in\mathcal{C}$, denote by $\rho^{c}(t)\in\R_{\ge 0}^{n}$ the vertex density and by $m^{c}(t)\in\R_{\ge 0}^{m}$ the edge momentum; conservation holds component-wise via
\[
\dot\rho^{c}(t)=D^\top m^{c}(t),\qquad c\in\mathcal{C},
\]
so the total mass $\sum_{v\in\calV}\rho^{c}(t,v)=M^{c}$ is constant in time. Because commodities share physical capacity on each edge $e_\ell=(v_i\to v_j)$, their momentum add and are bounded by the fundamental diagram 
\begin{align*}
0 \le \sum_{c\in\mathcal{C}} m^{c}(t,e_\ell)\ \le\ \mathcal{Q}_{e_\ell}\!\big(\bar\rho^{\rm tot}_{e_\ell}(t)\big),    
\end{align*}
that evaluated at the total midpoint density, i.e., $\bar\rho^{\rm tot}_{e_\ell}(t):=\tfrac12\Big(\sum_{c}\rho^{c}(t,v_i)+\sum_{c}\rho^{c}(t,v_j)\Big)$. For a (possibly weighted) sum of kinetic actions across commodities, the feasible set remains convex and the integrand is strictly convex in $(m^{c})_{c\in\mathcal{C}}$. Consequently, optimal commodity momentum is unique almost everywhere in time, and by the continuity equations with fixed initial data, the corresponding trajectories are unique.

\begin{remark}[\bf Time varying fundamental diagram]
Edge capacities may vary in time to model incidents, weather, or control inputs such as speed limits. Let
\[
\mathcal{Q}_{e_\ell}(t,\rho)=v_0^{e_\ell}(t)\,\rho\Big(1-\rho/\hat\rho^{e_\ell}(t)\Big),\quad 0\le \rho\le \hat\rho^{e_\ell}(t),
\]
with $v_0^{e_\ell}(t)$ and $\hat\rho^{e_\ell}(t)$ bounded and measurable. The capacity constraint is
\[
0\le m(t,e_\ell)\ \le\ \mathcal{Q}_{e_\ell}\big(t,\bar\rho_{e_\ell}(t)\big),\quad e_\ell\in\calE.
\]
For each fixed $t$, the feasible set is convex and strict convexity in $m$ is unchanged. Under Assumption~\ref{ass:existence}, Proposition~\ref{prop:unique} applies without modification.
\end{remark}

\section{Optimization method}\label{sec:optimization}

We discretize the dynamic optimal transport problem on a directed graph
$\mathcal{G}=(\mathcal{V},\mathcal{E})$ with $n=|\mathcal{V}|$ nodes and
$m=|\mathcal{E}|$ directed edges. We discretize time into $k$ intervals and introduce $\boldsymbol{\rho}=\begin{bmatrix}\rho_1 & \dots & \rho_{k-1}\end{bmatrix}^\top
\in \mathbb{R}^{(k-1)n}$, $\mathbf{m}=\begin{bmatrix}m_1 & \dots & m_k\end{bmatrix}^\top
\in \mathbb{R}^{km}$, and $\mathbf{q}=\begin{bmatrix}q_1 & \dots & q_k\end{bmatrix}^\top
\in \mathbb{R}^{km}$, where $\rho_0,\rho_k\in\mathbb{R}^n$ are fixed initial and terminal densities,
$m_i\in\mathbb{R}^m$ are edge fluxes, and $q_i\in\mathbb{R}^m$ are slack variables for the
fundamental diagram (FD).

Recall $D\in\mathbb{R}^{m\times n}$ denotes the directed incidence matrix, we define the block operators $\mathbf{D}=\mathrm{blkdiag}(D^\top,\dots, D^\top)\in\mathbb{R}^{kn\times km}$, and 
\[
\Delta\boldsymbol{\rho}
=
\begin{bmatrix}
\rho_1 -\rho_0\\
\rho_2-\rho_1\\
\vdots\\
\rho_{k-1}-\rho_{k-2}\\
\rho_k-\rho_{k-1}
\end{bmatrix}
\in\mathbb{R}^{kn},
\]
so that the continuity constraints takes in the form $\mathbf{D}\,\mathbf{m}=\Delta\boldsymbol{\rho}$.

At each step $i$, the midpoint density along edge $e_\ell$ is $\bar\rho_{e_\ell,i}=\frac{1}{2}\big(\rho_{i-1}(v)+\rho_i(w)\big)$ as in~\eqref{eq:mid-point-rho}. The discrete kinetic energy is
\[
\mathcal{J}(\boldsymbol{\rho},\mathbf{m})
=\frac{1}{2}\sum_{i=1}^k m_i^\top W_i(\boldsymbol{\rho})\,m_i
=\frac{1}{2}\,\mathbf{m}^\top W(\boldsymbol{\rho})\,\mathbf{m},
\]
where $W(\boldsymbol{\rho})=\mathrm{blkdiag}(W_1,\dots,W_k)$ and, for $e_\ell=(v\to w)\in\mathcal{E}$, we have 
\[
\big[W_i(\boldsymbol{\rho})\big]_{e_\ell,e_\ell}
= k\!\left(\frac{1}{2\rho_{i-1}(v)}+\frac{1}{2\rho_i(w)}\right).
\]

We enforce edgewise capacity through the admissible FD set, evaluated at midpoint densities for $i=1,\dots,k$:
\begin{equation}\label{eq:fd-set}
\small \mathcal{K}_{\rm FD}(\boldsymbol{\rho})
=\Big\{\mathbf{q}\ \Big|\ 
0 \le q_i(e_\ell) \le \mathcal{Q}_{e_\ell}\big(\bar\rho_{e_\ell,i}\big)
\ \text{for all } e_\ell\in\mathcal{E}\Big\},
\end{equation}
where $\mathcal{Q}_{e_\ell}:[0,\hat\rho^{e_\ell}]\to\mathbb{R}_+$ is the concave flow–density relation on edge $e_\ell$
(e.g., Greenshields $\mathcal{Q}_{e_\ell}(\rho)=v_0^{e_\ell}\rho\,(1-\rho/\hat\rho^{e_\ell})$).
Equivalently, one may eliminate $\mathbf{q}$ and enforce
$0 \le m_i(e_\ell) \le \mathcal{Q}_{e_\ell}(\bar\rho_{e_\ell,i})$ for all $e_\ell$ and $i$.

\subsection{ADMM method}

We employ the alternating direction method of multipliers (ADMM) \cite{boyd2011distributed,bertsekas2014constrained,boyd2004convex}. The variable split is chosen to exploit the graph and time structure. We keep $\mathbf m$ as the momentum (one block per time step), $\boldsymbol{\rho}$ as node densities at intermediate snapshots (endpoints fixed), and introduce a consensus copy $\mathbf q$ that will carry the fundamental-diagram (FD) constraint by projection. The linear continuity relation couples consecutive snapshots, while the kinetic objective couples $\mathbf m$ to $\boldsymbol{\rho}$ through midpoint weights. With this split, we consider
\begin{align*}
\min_{\boldsymbol{\rho},\,\mathbf{m},\,\mathbf{q}\in\mathcal{K}_{\rm FD}(\boldsymbol{\rho})}\
& \frac12\,\mathbf{m}^\top W(\boldsymbol{\rho})\,\mathbf{m}\\
\text{subject to}\quad &
\mathbf{D}\mathbf{m}-\Delta\boldsymbol{\rho}=0,\\ &\mathbf{m}-\mathbf{q}=0.    
\end{align*}
By introducing the dual variable for the constraints, the augmented Lagrangian is
\begin{align*}
&\mathcal{L}(\boldsymbol{\rho},\mathbf{m},\mathbf{q};\lambda,\phi)
=\tfrac12\,\mathbf{m}^\top W(\boldsymbol{\rho})\,\mathbf{m}
+\lambda^\top(\mathbf{D}\mathbf{m}-\Delta\boldsymbol{\rho})\\
&+\tfrac{\beta}{2}\|\mathbf{D}\mathbf{m}-\Delta\boldsymbol{\rho}\|_2^2
+\phi^\top(\mathbf{m}-\mathbf{q})
+\tfrac{\gamma}{2}\|\mathbf{m}-\mathbf{q}\|_2^2 + \iota_{\mathcal K}(\mathbf{q}),    
\end{align*}
with $\iota(\cdot)$ as the indicator function.

An iteration of ADMM is to update $\mathbf m, \mathbf q,$ and $\boldsymbol{\rho}$ in turn by the minimizers of the augmented Lagrangian, followed by updating the dual variables $\lambda$ and $\phi$. This is performed iteratively until convergence. The details of each update are summarized as follows.

The update of $\mathbf m$ is obtained by solving a strictly convex quadratic program.  
Given $\boldsymbol{\rho}^t$, the first-order optimality condition leads to the sparse SPD system
\begin{align}\label{eq:update-m}
\small 
\big(W(\boldsymbol{\rho}^{t})+\beta\,\mathbf D^\top\mathbf D+\gamma I\big)\,\mathbf m^{t+1}
=\beta\,\mathbf D^\top\Delta\boldsymbol{\rho}^{t}-\mathbf D^\top\lambda^{t}
+\gamma\,\mathbf q^{t}-\phi^{t},    
\end{align}
which can be solved efficiently bythe  sparse Cholesky or PCG method.

The update of $\mathbf q$ amounts to an elementwise projection onto the FD hypograph evaluated at $\boldsymbol{\rho}^t$, i.e.,
\begin{align}\label{eq:update-q}
\mathbf q^{t+1}
=\Pi_{\mathcal K_{\rm FD}(\boldsymbol{\rho}^{t})}\!\Big(\mathbf m^{t+1}+\frac{\phi^{t}}{\gamma}\Big),   
\end{align}
which, under Greenshields, reduces to clamping between $0$ and $\mathcal Q_{e_\ell}(\bar\rho^{\,t}_{e_\ell,i})$.

The update of $\boldsymbol{\rho}$ is strictly convex on and inherits a block–tridiagonal structure from $\Delta$, so that
\begin{align}\label{eq:update-rho}
\begin{aligned}
\boldsymbol{\rho}^{t+1}
=\argmin_{\boldsymbol{\rho}}\;\;
\frac12 (\mathbf m^{t+1})^\top W(\boldsymbol{\rho})\,\mathbf m^{t+1}
-\lambda^{t\top}\Delta\boldsymbol{\rho}
+\frac{\beta}{2}\,\|\mathbf D\mathbf m^{t+1}-\Delta\boldsymbol{\rho}\|_2^2
\end{aligned}
\end{align}
that can be solved efficiently either by Newton iterations. Finally, the dual variables are updated by 
\begin{align}\label{eq:update-dual}
\begin{aligned}
\lambda^{t+1}&=\lambda^{t}+\beta\big(\mathbf D\mathbf m^{t+1}-\Delta\boldsymbol{\rho}^{t+1}\big),\\
\phi^{t+1}&=\phi^{t}+\gamma\big(\mathbf m^{t+1}-\mathbf q^{t+1}\big).    
\end{aligned}
\end{align}
The updating scheme can be thus summarized as follows.
\begin{algorithm}
\caption{ADMM iteration}
\begin{algorithmic}[1]
  \State Initialize $\boldsymbol{\rho}^0,\mathbf m^0,\mathbf q^0,\lambda^0,\phi^0$
  \Repeat
    \State update $\mathbf m^{t+1}$ by solving \eqref{eq:update-m}
    \State update $\mathbf q^{t+1}$ by FD projection \eqref{eq:update-q}
    \State update $\boldsymbol{\rho}^{t+1}$ by convex minimization \eqref{eq:update-rho}
    \State update $\lambda^{t+1},\phi^{t+1}$ by dual ascent \eqref{eq:update-dual}
  \Until{converged}
\end{algorithmic}
\end{algorithm}

\section{Numerical simulation}\label{sec:numerical}

\subsection{Single-lane line network}\label{subsec:line}
We first start with the line network, the simplest synthetic setting that captures the essentials of single-lane traffic (see, Figure \ref{fig:line}). We simulate on a 30-node directed line with parameters $\hat\rho=0.15$ and $v_0=3$. The time grid has eight snapshots $t=0/7,1/7,\dots,7/7$. Each node represents a spatial cell, and each directed edge a possible transfer of mass between adjacent cells. The model enforces conservation and capacity through the continuity equation and a fundamental-diagram constraint, and uses a convex objective that penalizes kinetic action. The midpoint scheme advances the state in time and yields a smooth density evolution from an upstream bump to a downstream target. Edge widths visualize momentum, and a centered-in-time averaging aligns the plot with the discretization, producing the expected symmetry. 

\begin{figure*}[htb!]
    \centering
    \includegraphics[width=1\linewidth]{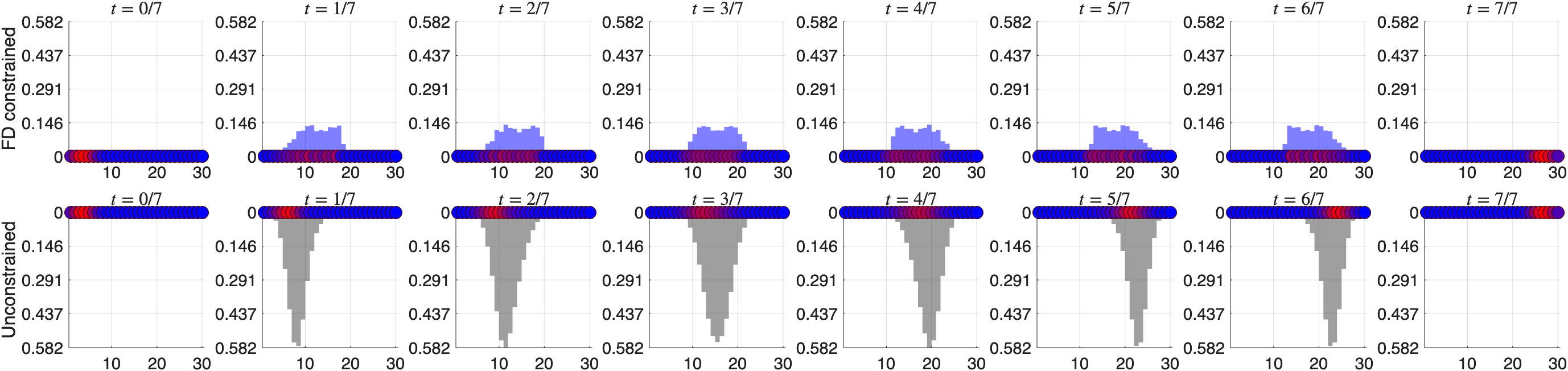}
    \caption{Evolution of densities on a 30-node line graph with $8$ snapshots at evenly spaced times $t=0/7,1/7,\dots,7/7$. Nodes are colored from blue (low density) to red (high density), with darker shades indicating higher values. Edges are drawn as semi-transparent blue bars, with thickness proportional to total momentum flow across each link, scaled consistently across snapshots. The dynamics are obtained from the midpoint discretization of the convex formulation, with symmetry enforced by averaging forward and backward momentum in time. $\hat \rho = 0.15$ and $v_0=3$.}
    \label{fig:line}
\end{figure*}

\subsection{Urban planar graph}
To complement the one-dimensional line graph experiment, we next consider a more realistic road topology obtained from the Athens metropolitan road network. Starting from publicly available GIS data \cite{yap2023global}, we generate a simplified planar subgraph with $n=291$ nodes and $m=614$ directed edges, preserving the sparse and locally planar structure typical of urban street layouts, see also \cite{solomon2016continuous}. Each node represents an intersection and carries the density variable, while directed edge pairs represent bidirectional road segments that support the momentum variables. This setting allows us to examine transport over a nontrivial spatial network with heterogeneous connectivity and multiple alternative routes. The problem is formulated as before, with density initialized near the upper-right portion of the network and transported toward target nodes near the bottom center. The Athens example in Figure \ref{fig:urban} highlights two key features: the role of network geometry in dispersing mass across multiple parallel routes, and the effect of capacity constraints (via the fundamental diagram) in shaping the evolution of traffic densities.

\begin{figure*}[htb!]
\centering
\includegraphics[width=1\linewidth]{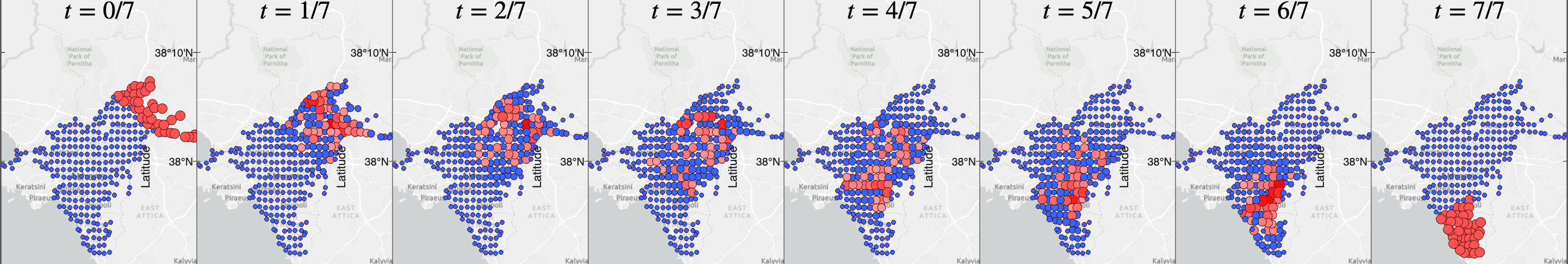}
\includegraphics[width=1\linewidth]{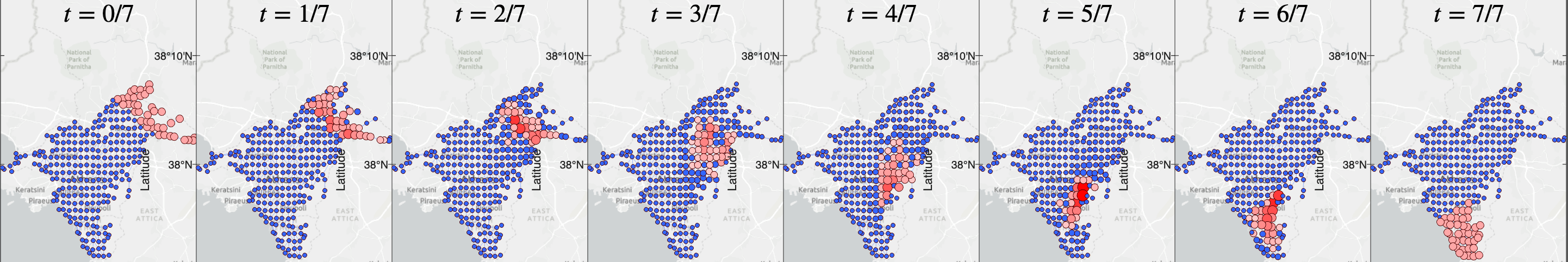}
\caption{Top row: congestion-aware transport on a directed network with $n=291$ nodes and $m=614$ directed edges. Bottom row: the same setting without the fundamental-diagram constraint. Snapshots are shown at $t\in\{0,\tfrac17,\tfrac27,\tfrac37,\tfrac47,\tfrac57,\tfrac67,1\}$ using a midpoint discretization with k=7 steps. For the top row, Greenshields capacities $\mathcal{Q}_e(\rho)=v_0\,\rho\!\left(1-\rho/\hat\rho\right)$ with $\hat\rho=0.05$ and $v_0=2$ are enforced on interior steps, and the endpoint frames are unconstrained. The bottom row uses the same solver and time grid but omits the capacity constraint. Node color and size encode density, with blue near zero and red at higher values and marker area proportional to mass.}
\label{fig:urban}
\end{figure*}

\subsection{Convergence results}\label{subsec:conv}

We first study the unconstrained case by setting the FD bound sufficiently large so that it is inactive throughout the evolution. We solve the six snapshots problem on a $30$ node directed line using the midpoint discretization and the ADMM scheme from Section~\ref{sec:optimization}.\footnote{Parameters: $\beta=60$, $\gamma=50$, midpoint discretization; stopping tolerance $10^{-8}$.} As a baseline, we compute the optimal objective with the MATLAB CVX solver~\cite{grant2008cvx}. Figure~\ref{fig:convergence_admm} presents the kinetic objective versus iteration (solid), together with the CVX optimum (dashed). The ADMM objective decreases smoothly and reaches the CVX value to plotting accuracy; the primal continuity and consensus residuals fall below $10^{-8}$ under the stated tolerances.

\begin{figure}[htb!]
    \centering
    \includegraphics[width=0.6\linewidth]{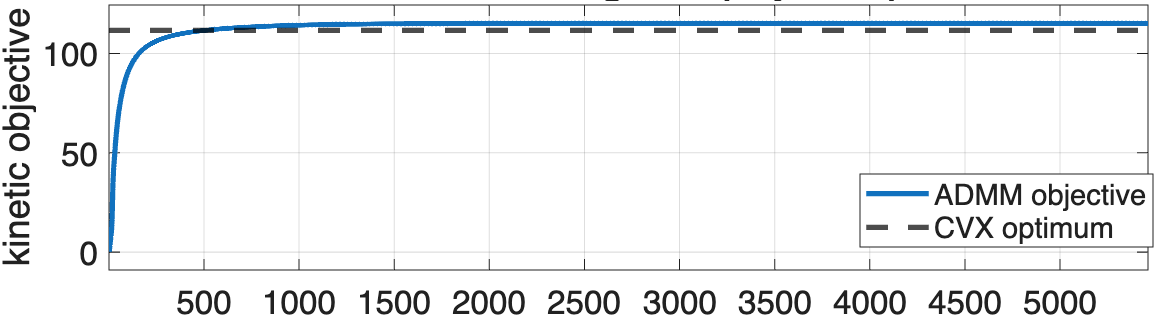}
    \caption{ADMM convergence on a line with six snapshots (FD inactive). The curve shows the kinetic objective versus iteration. The dashed line marks the CVX optimum.}
    \label{fig:convergence_admm}
\end{figure}

We next activate the capacity constraint using the Greenshields FD (with $v_0=1$ and $\hat\rho=0.10$), enforced on the interior time steps $i=2,\dots,k-1$ to match the CVX setup in Section~\ref{subsec:line}. The same ADMM parameters are used. Figure~\ref{fig:convergence_admm_fd} shows the resulting convergence: the objective approaches the CVX benchmark while the FD projection keeps all iterates feasible with respect to the per–edge capacities.

\begin{figure}[htb!]
    \centering
    \includegraphics[width=0.6\linewidth]{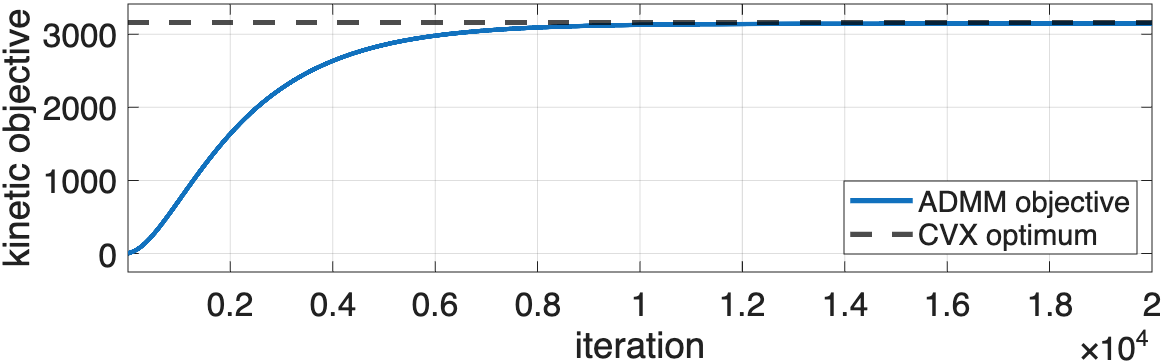}
    \caption{ADMM convergence with Greenshields FD active ($v_0=1$, $\hat\rho=0.10$). The projection step enforces the FD constraint at every iteration; the objective approaches the CVX benchmark under the same ADMM parameters as in Figure~\ref{fig:convergence_admm}.}
    \label{fig:convergence_admm_fd}
\end{figure}

\section{Conclusion}\label{sec:conclusion}
We formulate dynamic optimal transport on directed graphs with edgewise capacity limits dictated by a fundamental diagram, derive a midpoint discretization that yields a convex program with a unique minimizer, and design an augmented-Lagrangian splitting method with simple, scalable subproblems. Case studies on a line and a city-scale planar subgraph illustrate congestion-aware displacement and convergence to convex baselines, indicating suitability for planning and closed-loop scheduling on capacity-limited networks.

\bibliographystyle{plain}
\bibliography{references}

\end{document}